\newcommand{\ov}{\xbar}
\newcommand{\reals}{\mathbb{R}}
\newcommand{\Jf}{\mathfrak{I}}
\newcommand{\T}{\top}
\newtheorem{remark}{Remark}{}
\newtheorem{problem}{Problem}
\newtheorem{thm}{Theorem}
\newtheorem{cor}{Corollary}
\newtheorem{lem}{Lemma}
\newtheorem{defin}{Definition}
\newcommand*\xbar[1]{%
   \hbox{%
     \vbox{%
       \hrule height 0.7pt % The actual bar
       \kern0.35ex%         % Distance between bar and symbol
       \hbox{%
         \kern-0.1em%      % Shortening on the left side
         \ensuremath{#1}%
         \kern-0.1em%      % Shortening on the right side
       }%
     }%
   }%
} 
\begin{document}
\title{Static output feedback stabilization of uncertain rational nonlinear systems with input saturation}
\author{Thiago Alves Lima, Diego de S. Madeira, Valessa V. Viana, Ricardo C. L. F. Oliveira
% \thanks{Manuscript received month day, 2021; revised month day, 2021; accepted
% month day, 2021. Date of publication month day, 2021; date of current version month day, 2021. This study was financed in part by the Coordenação de Aperfeiçoamento de Pessoal de Nível Superior - Brasil (Capes). Recommended by Senior Editor X. XXXX.
% (Corresponding author: Thiago Alves Lima, e-mail: thiago.lima@alu.ufc.br)}
\thanks{Thiago Alves Lima, Diego de S. Madeira, and Valessa V. Viana are with the Department of Electrical Engineering, Federal University of Ceará, Fortaleza, CE, Brazil. Ricardo C.L.F. Oliveira is with the School of Electrical and Computer Engineering, University of Campinas – UNICAMP, 13083-852, Campinas, SP, Brazil.}}

\maketitle

\begin{abstract}
In this paper, the notion of robust strict QSR-dissipativity is applied to solve the static output feedback control problem for a class of continuous-time nonlinear rational systems subject to input saturation and bounded parametric uncertainties. A local dissipativity condition is combined with generalized sector conditions to formulate the synthesis of a stabilizing controller in terms of linear matrix inequalities. The strategy applies to general static output feedback design without any restrictions on the plant output equation. An iterative algorithm based on linear matrix inequalities is proposed in order to compute the feedback gain matrix that maximizes the estimate of the closed-loop region of attraction. Numerical examples are provided to illustrate the applicability of this new approach in examples borrowed from the literature.
\end{abstract}

\begin{IEEEkeywords}
Rational nonlinear systems, static output feedback, control saturation, dissipativity, robust control, linear matrix inequalities.  
\end{IEEEkeywords}

\section{Introduction}
\label{intro}

Saturating actuators are ubiquitous to real-world dynamical systems and are, by themselves, a nonlinearity to closed-loop systems that can degrade their performance or cause instability even when the open-loop system is modeled by linear methods \cite{Tarbouriech_2011}. Therefore, the consideration of such constraints is rather important when designing feedback control laws. Generally speaking, nonlinear models are closer to achieving the goal of well representing real systems than the much simpler linearized models around equilibrium points. However, their consideration increases the complexity of both open-loop analysis and closed-loop control design since, in contrast to the case of linear systems, most of the tools developed to nonlinear systems cannot be applied in a unified manner. Nonetheless, important contributions have been made in the last decades to provide useful tools to study such systems \cite{haddad2008nonlinear,Khalil_2002}. 

Some important works dealt with the analysis and control of rational nonlinear systems with input saturation. An early paper coping with this problem was \cite{coutinho_2010}, where analysis conditions based on linear matrix inequalities (LMIs) \cite{BEFB:94} were used for computing estimates of the region of attraction for rational control systems with saturating actuators. Later, in \cite{oliveira2012state}, state feedback design has been proposed for the special case of single-input systems. The design of this type of control law has also been tackled in \cite{azizi2018regional}, nonetheless with the consideration of multiple-input multiple-output (MIMO) systems with parametric uncertainties. More recently, the co-design of dynamical controllers and anti-windup loops was studied in \cite{Castro_2021}.

Nevertheless, none of these works have provided tools for the static output feedback (SOF) stabilization of rational nonlinear systems with saturating inputs. In fact, it is well known that a definitive solution for the SOF stabilization problem is not a consensus in the control community even for the case of linear systems (see the survey by \cite{SADABADI2016}), thus justifying the apparent scarcity of works for the much more complex case of nonlinear systems. Recently, the work in \cite{Madeira_2021} presented new necessary and sufficient conditions based on the notion of exponential QSR-dissipativity for the exponential stabilizability of nonlinear systems by linear static output feedback. Preliminary results regarding the development of a linear semidefinite programming (SDP) \cite{VB:96} approach for the robust asymptotic stabilization of nonlinear systems based on \cite{Madeira_2021} are available in \cite{madeira2020application}.

 QSR-dissipativity was first considered by \cite{willm2}, for the case of LTI systems. Soon after, it was applied for input-affine nonlinear systems by \cite{hill2} in their celebrated nonlinear version of the Kalman-Yakubovich-Popov (KYP) lemma. Then a principle called topological (graph) separation was introduced in \cite{safo1} as a geometrical tool for stability analysis of interconnected dynamical systems. An equivalence to Lyapunov stability conditions was claimed, although asymptotic stability was not addressed. Later on, robust stability analysis was tackled using the integral quadratic constraint (IQC) framework introduced by \cite{meg1} as a unifying approach for all previously mentioned methods. Nevertheless, controller design was not treated in that work. Then, a few decades after the publication of \cite{willm2}, \cite{hill2} and \cite{safo1}, topological separation was applied for establishing new necessary and sufficient conditions for linear SOF stabilization of LTI systems \cite{Peaucelle_2005}. The method consists in solving an LMI and a certain non-convex condition which provides a whole set of stabilizing controllers (resilient control). The linear part of the dissipativity-based approach of \cite{Madeira_2021} is equivalent to the results of \cite{Peaucelle_2005}, involving the same nonlinear inequality $SR^{-1}S^\top - Q\ge 0$. In \cite{Madeira_2021}, though, necessary  and sufficient stabilizability conditions for a broad class of nonlinear systems were provided, based instead on the constraint $SR^{-1}S^\top - Q= 0$.

In this paper, we explore the QSR-dissipativity results proposed in \cite{Madeira_2021} to address the problem of robust stabilization of the quite general class of rational nonlinear systems. Aiming the treatment of more realistic models, we consider that the plant is affected by input saturation and parametric uncertainties, thus increasing the level of complexity of the control design problem. We solve the stabilization problem by using a combination of QSR-dissipativity and generalized sector conditions. To address the most challenging issue, which is the formulation of the static output-feedback gain in terms of convex conditions, we propose a relaxation in the dissipativity condition using slack variables, solving the resulting inequalities in terms of an iterative algorithm, where LMIs are solved at each iteration. An algorithm is also formulated to design the stabilizing gain such that the closed-loop region of attraction is maximized. To the best of the authors' knowledge, this is the first work proposing a solution to the SOF stabilization problem of rational nonlinear systems with input saturation. The developed conditions can easily be used to tackle polynomial systems, as investigated in \cite{Valmorbida2013,RAN2016,JENNAWASIN2018}. Finally, numerical examples borrowed from the literature are presented to illustrate the effectiveness and less conservatism when compared (when possible) with previous approaches from the literature.

The rest of this paper is organized as follows. In Section~\ref{sec:pb}, the problem formulation and statement are presented, whereas the class of studied nonlinear systems is minutely presented. Theoretical preliminaries and the main results are presented in Sections~\ref{sec:preliminaries} and~\ref{sec:main}, respectively. Numerical examples from the literature are then used in Section~\ref{sec:simu}. Finally, Section~\ref{sec:conclu} closes the paper with a summary of conclusions and perspectives of future works.

\textbf{Notation.} For matrices $W = W^{\T}$ and $Z = Z^{\T} $ in $ \reals^{n \times n}$, $W \succ Z$ means that $W-Z$ is positive definite. Likewise, $W \succeq Z$ means that $W-Z$ is positive semi-definite. $\mathbb{S}_n^{+}$ stands for the set of $n \times n$ symmetric positive definite matrices. $I$ and $0$ denote identity and null matrices, respectively. The symbol $\star$ in the expression of a matrix denotes blocks induced by symmetry. For matrices $W$ and $Z$, \textit{diag}$(W,Z)$ corresponds to the block-diagonal matrix. The operator He$\{A\}$ denotes He$\{A\}=A+A^{\T}$. $f : \mathcal{X} \rightarrow \mathcal{Y}$ is a (vector) function with domain $\mathcal{X}$ and codomain $\mathcal{Y}$. $\mathcal{X} \times \mathcal{Y}$ is the Cartesian product of sets $\mathcal{X}$ and $\mathcal{Y}$. $sign(x)$ denotes the signum function, i.e, $sign(x) = -1$ if $x<0$, $sign(x) = 0$ if $x=0$, and $sign(x) = 1$ if $x>0$. Finally, $\mathcal{C}^1$ denotes the set of functions whose partial derivatives exist and are continuous, i.e., $f \in \mathcal{C}^1$ means that $f$ is continuously differentiable.  

\section{Problem formulation}
\label{sec:pb}

\subsection{System description}
Consider the following uncertain nonlinear system
\begin{equation}\label{eq:nonlinear:system}
    \begin{cases}
      \dot{x}(t) = f(x(t),\delta(t))+g(x(t),\delta(t))sat(v(t))\\
      y(t) = h(x(t),\delta(t))
    \end{cases}
\end{equation}
\noindent defined for $t\geq0$, where $x(t) \in \reals^{n}$ is the state with initial condition $x(0) \in \mathcal{X} \subseteq \reals^n$, $\delta(t) \in \mathcal{D} \subset \reals^l$ is a vector of bounded time-varying parameters which accounts for deviations of the model description around its nominal part and $y(t) \in \reals^{p}$ is the system output. To stabilize \eqref{eq:nonlinear:system}, we consider that
\begin{equation}\label{eq:control}
    v(t)=Ky(t)
\end{equation}
is a static output-feedback control law where $K \in \reals^{m \times p}$ is a matrix gain to be designed, and that the constrained plant input is given by the decentralized saturation function $sat(v(t)) \in \reals^{m}$ defined as
\begin{equation}\label{eq_sat}
sat(v_{(i)}) = sign(v_{(i)}) \min\{|v_{(i)}|,\xbar{u}_{(i)} \}, ~\xbar{u}_{(i)}>0, 
  \end{equation}
\noindent for $i=1, \dots, m$, where $\xbar{u}_{(i)}$ denotes the amplitude bound in each actuator. 

The set $\mathcal{X} \subseteq \reals^n$ is a given polytope (with $n_x$ vertices) of initial conditions $x(0)$ containing the origin for which one wants to study stability and stabilization of the closed-loop interconnection \eqref{eq:nonlinear:system}-\eqref{eq:control}. Such a polytope can be represented as the intersection of $n_{xe}$ hyperplanes \cite{coutinho_2010}
\begin{equation}\label{eq:setX}
    \mathcal{X} = \{x~|~a_k^{\T} x \leq 1, k=1,\dots, n_{xe}\},
\end{equation}
where the constant vectors $a_k \in \reals^{n}$ can be determined by fulfilling $a_k^{\T} x = 1$ at all groups of adjacent vertices of $\mathcal{X}$~\footnote{Set $\mathcal{X}$ does need to be positively invariant. Furthermore, a description of a set $\mathcal{D}$ similar to the one in \eqref{eq:setX} can also be obtained.}. Furthermore, functions $f(x,\delta)$, $g(x,\delta)$, and $h(x,\delta)$ are polynomial or rational functions on their arguments such that $(f,g,h) \in \mathcal{C}^1$, $\left(f(0,\delta),h(0,\delta)\right) = (0,0)$ for all $\delta \in \mathcal{D}$, and the origin $(x(t),u(t))=(0,0)$ is an equilibrium point of \eqref{eq:nonlinear:system}. Moreover, the computed control signal $v(t)$ is a measurable function, with $v(t) \in \mathcal{V} \subseteq \reals^{m}$ for all $t \geq 0$, $0 \in \mathcal{V}$. Then, the described system obeys the conditions for the existence and uniqueness of solutions $\forall x(0) \in \mathcal{X}$ (see, for example, \cite{haddad2008nonlinear}). 

\subsection{Differential Algebraic Representation -- DAR}
The dynamical system \eqref{eq:nonlinear:system} can be represented in many different and equivalent ways. In the case of a rational model, a much convenient representation is the well-known Differential Algebraic Representation (DAR), which is referred to as providing less conservative results than Linear Fractional Representations (LFR) and Linear Parameter Varying (LPV) forms. A DAR is more general than LFR and LPV approaches, and it usually leads to larger estimates of a domain of attraction \cite{coutinho_2010,azizi2018regional}. A DAR of an input-affine uncertain nonlinear system with input saturation such as \eqref{eq:nonlinear:system} is given by
\begin{align}
    \begin{cases}\label{eq:Plant}
      \dot{x}\hspace{-0.06cm}=\hspace{-0.06cm}A_1(x,\delta)x(t)\hspace{-0.06cm}+\hspace{-0.06cm}A_2(x,\delta)\pi(t)\hspace{-0.06cm}+\hspace{-0.06cm}A_3(x,\delta)sat(v(t))\\
      0\hspace{-0.06cm} =\hspace{-0.06cm} \Upsilon_1(x,\delta)x(t)\hspace{-0.06cm} +\hspace{-0.06cm} \Upsilon_2(x,\delta)\pi(t)\hspace{-0.06cm}+\hspace{-0.06cm}\Upsilon_3(x,\delta)sat(v(t))\\
      y \hspace{-0.06cm}=\hspace{-0.06cm} C_1x(t)+C_2\pi(t)
    \end{cases} 
    \end{align}
\noindent where $\pi(x,sat(v),\delta) \in \reals^{n_{\pi}}$ is a suitably chosen vector of nonlinear functions. Matrices $A_1(x,\delta) \in \reals^{n \times n}$, $A_2(x,\delta) \in \reals^{n \times n_{\pi}}$, $A_3(x,\delta) \in \reals^{n \times m}$, $\Upsilon_1(x,\delta) \in \reals^{n_{\pi} \times n}$, $\Upsilon_2(x,\delta) \in \reals^{n_{\pi} \times n_{\pi}}$, $\Upsilon_3(x,\delta) \in \reals^{n_{\pi} \times m}$ are affine functions with respect to $(x,\delta)$, while $\Upsilon_2(x,\delta) \in \reals^{n_{\pi} \times n_{\pi}}$ is supposed to be a square full-rank matrix for all vectors $(x,\delta) \in \mathcal{X} \times \mathcal{D}$, and $C_1, C_2$ are constant matrices of appropriate dimensions.  

The DAR of a system is not unique and the state-space representation \eqref{eq:nonlinear:system} is well-posed in its DAR form if $\Upsilon_2(x,\delta)$ is invertible, as from \eqref{eq:Plant} we have that 
\begin{equation*}
    \pi(x,sat(v),\delta) = -\Upsilon_2^{-1} \left(\Upsilon_1x(t)+\Upsilon_3 sat(v(t)) \right),
\end{equation*}
\noindent leading to
\begin{equation*}
    \dot{x} = \left(A_1-A_2 \Upsilon_2^{-1} \Upsilon_1 \right) x(t) + \left(A_3-A_2 \Upsilon_2^{-1} \Upsilon_3 \right) sat(v(t)).
\end{equation*}

\begin{remark}
The matrices $C_1$ and $C_2$ are constrained to be constant in order to derive numerically tractable design conditions. As a consequence, more complex vectors $\pi(x,sat(v),\delta)$ might have to be considered. Such choice is justified when developing the dissipativity-based stabilization conditions in the paper, since $C_1$ and $C_2$ appear quadratically in the conditions of the main theorem, which would require more complex relaxations (also potential sources of conservativeness) to check the parameter-dependent inequalities in the case of affine dependence on $(x,\delta)$.
\end{remark}

\subsection{Problem statement}

The problem solved in this letter can be summarized as follows.
\begin{problem}
Given the DAR matrices $A_1$, $A_2$, $A_3$, $\Upsilon_1$, $\Upsilon_2$, $\Upsilon_3$, $C_1$, and $C_2$ of the nonlinear system \eqref{eq:nonlinear:system}, polytopes $\mathcal{X}$, $\mathcal{D}$, and the saturation amplitude bounds $\xbar{u}_i$, $i=1,\dots,m$, develop conditions for the design of the static output feedback gain $K$ such that the asymptotic stability of the closed-loop system yielded from the connection \eqref{eq:nonlinear:system}-\eqref{eq:control} is ensured for some set of initial conditions $\mathcal{H} \subseteq \mathcal{X} \subset \reals^n$. Furthermore, estimates of the region of attraction of the saturated closed-loop system must be maximized.
\label{prob1}
\end{problem}

\section{Theoretical preliminaries}\label{sec:preliminaries}

\subsection{Dissipativity}
A dynamical system such as
\begin{align}
    \begin{cases}\label{nonlinearsystem}
\dot{x}(t)=f(x(t))+g(x(t),v(t)),\\
y(t)=h(x(t)),
    \end{cases}
\end{align}
is said to be dissipative if it is completely reachable and there exists a nonnegative storage function \(V(x(t))\), where \(V: \mathcal{X} \rightarrow \mathbb{R}\) and \(V \in \mathcal{C}^1\), and a locally integrable supply rate \(r(v(t),y(t))\) such that \(\dot{V} \leq r(v,y)\). In this work, we employ the definition of strict QSR-dissipativity presented below \cite{brogl1}.
\begin{defin}\label{defdissip}
A system is said to be strictly QSR-dissipative along all possible trajectories of \eqref{nonlinearsystem} starting at \(x(0)\), for all \(t \geq 0\), if there exists \(T(x)>0\) such that
\begin{equation}\label{dissipativityeq_geral}
 \dot{V}(x)+T(x) \leq y^{\top}Qy+2y^{\top}Sv+v^{\top}Rv,
\end{equation}
where \(S \in \mathbb{R}^{p \times m}\), \(Q \in \mathbb{R}^{p \times p}\), and \(R \in \mathbb{R}^{m \times m}\).
\end{defin}
 
\subsection{Finsler's Lemma}
In this subsection, the celebrated Finsler's Lemma is reproduced for convenience \cite{Mauricio_2001}.

\begin{lem} \label{finslerlemma}
Consider \(\mathcal{W} \subseteq \mathbb{R}^{n_s}\) a given polytopic set, and let \(\Phi:\mathcal{W} \rightarrow \mathbb{R}^{n_q \times n_q}\) and \(\Gamma:\mathcal{W} \rightarrow \mathbb{R}^{n_r \times n_q}\) be given matrix functions, with \(\Phi\) symmetric. Then, the following statements are equivalent 
\renewcommand{\theenumi}{\roman{enumi}}%
\begin{enumerate}
    \item \(\forall w \in \mathcal{W}\) the condition \(z^{\top}\Phi(w)z>0\) is satisfied \(\forall z \in \mathbb{R}^{n_q}:\Gamma(w)z=0\). 
    \item \(\forall w \in \mathcal{W}\) there exists a certain matrix function \(\Jf:\mathcal{W} \rightarrow \mathbb{R}^{n_q \times n_r}\) such that \(\Phi(w)+\Jf(w)\Gamma(w)+\Gamma(w)^{\top}\Jf(w)^{\top}\succ0\).
\end{enumerate}
\end{lem}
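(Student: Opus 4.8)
The plan is to prove the two implications separately, observing that both statements are quantified pointwise over $w \in \mathcal{W}$ and that (ii) permits the multiplier $\Jf$ to depend on $w$; consequently the equivalence reduces to the classical pointwise Finsler equivalence applied at each fixed $w$. The implication (ii) $\Rightarrow$ (i) is immediate: fix $w$ and take any nonzero $z$ with $\Gamma(w)z = 0$. In the quadratic form $z^{\T}\big(\Phi(w)+\Jf(w)\Gamma(w)+\Gamma(w)^{\T}\Jf(w)^{\T}\big)z$ the two cross terms vanish, since $z^{\T}\Jf(w)\Gamma(w)z = 0$ and $z^{\T}\Gamma(w)^{\T}\Jf(w)^{\T}z = (\Gamma(w)z)^{\T}\Jf(w)^{\T}z = 0$. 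Positive definiteness of the full matrix therefore forces $z^{\T}\Phi(w)z > 0$, which is exactly (i).

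The substance is the converse (i) $\Rightarrow$ (ii), and here I would first reduce the matrix multiplier to a scalar one. Specifically, I claim it suffices to show that for each $w$ there is a scalar $\mu(w) \geq 0$ with $\Phi(w) + \mu(w)\Gamma(w)^{\T}\Gamma(w) \succ 0$; setting $\Jf(w) = \tfrac{1}{2}\mu(w)\Gamma(w)^{\T}$ then yields $\Jf(w)\Gamma(w)+\Gamma(w)^{\T}\Jf(w)^{\T} = \mu(w)\Gamma(w)^{\T}\Gamma(w)$ and hence the inequality in (ii). So the task becomes: assuming $z^{\T}\Phi(w)z > 0$ for every nonzero $z$ in the kernel of $\Gamma(w)$, produce such a $\mu(w)$.

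To produce $\mu(w)$ I would argue by contradiction using compactness of the unit sphere. Fix $w$ and suppose no finite $\mu$ works; then for every $k \in \naturals$ there is a unit vector $z_k$ with $z_k^{\T}\Phi(w)z_k + k\,\|\Gamma(w)z_k\|^2 \leq 0$. By compactness of the unit sphere in $\reals^{n_q}$, pass to a convergent subsequence $z_k \to z^\star$ with $\|z^\star\| = 1$. From the inequality, $\|\Gamma(w)z_k\|^2 \leq -\tfrac{1}{k}\,z_k^{\T}\Phi(w)z_k \to 0$, so $\Gamma(w)z^\star = 0$; and since $k\,\|\Gamma(w)z_k\|^2 \geq 0$ we also get $z_k^{\T}\Phi(w)z_k \leq 0$, whence $z^{\star\T}\Phi(w)z^\star \leq 0$ in the limit. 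Thus $z^\star$ is a nonzero kernel vector with $z^{\star\T}\Phi(w)z^\star \leq 0$, contradicting (i). Hence a finite $\mu(w)$ exists, completing the construction.

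The main obstacle I anticipate lies entirely in the (i) $\Rightarrow$ (ii) direction: the reduction to a scalar multiplier and the limiting argument that simultaneously extracts $\Gamma(w)z^\star = 0$ while preserving $z^{\star\T}\Phi(w)z^\star \leq 0$. Because (ii) only requires the pointwise existence of $\Jf(w)$, no uniform-in-$w$ bound on $\mu(w)$ nor any regularity of the map $w \mapsto \mu(w)$ is needed, which removes what would otherwise be the delicate part over the polytope $\mathcal{W}$; were a specific functional form of $\Jf$ demanded instead, one would have to track continuity or measurability of the multiplier across $\mathcal{W}$.
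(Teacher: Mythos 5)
Your proof is correct. Note, however, that the paper does not prove this lemma at all: it is reproduced verbatim from the cited reference (de Oliveira and Skelton), so there is no in-paper argument to compare against. What you have written is the standard classical proof of Finsler's lemma --- the easy direction by annihilation of the cross terms on $\ker\Gamma(w)$, and the converse via reduction to a scalar multiplier $\Jf(w)=\tfrac{1}{2}\mu(w)\Gamma(w)^{\T}$ together with a compactness/contradiction argument on the unit sphere to produce a finite $\mu(w)$. All steps check out: the bound $\|\Gamma(w)z_k\|^{2}\leq-\tfrac{1}{k}z_k^{\T}\Phi(w)z_k\to0$ is valid because $z_k^{\T}\Phi(w)z_k$ is bounded on the unit sphere, and the limit point $z^{\star}$ is a nonzero kernel vector violating (i). Your closing observation is also the right one in context: because (ii) only asks for pointwise existence of $\Jf(w)$, no regularity of $w\mapsto\mu(w)$ is needed; the paper's subsequent restriction to a \emph{constant} $\Jf$ (to get vertex LMIs) is exactly where the equivalence degrades to mere sufficiency, as the authors themselves remark.
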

In order to obtain LMI conditions defined only at the vertices of the sets $\mathcal{X}$ and $\mathcal{D}$ in the case that \(\Gamma\) and \(\Phi\) are affine functions of \(w\), one can apply form \(ii)\) of Finsler's Lemma with a constant decision variable matrix \(\Jf\). In this case, \(ii)\) is only sufficient to guarantee \(i)\). %Note that \(\Gamma\) is an annihilator of vector \(z\), which in general is not unique \cite{trofi2,coutinho2008}, however can be easily determined for the analysis of systems represented in the DAR form \eqref{eq:Plantdeadzone}, as will be clear later in the paper. 

\subsection{Generalized sector condition}

In the case of a system with input saturation, as the DAR \eqref{eq:Plant}, the satisfaction of \eqref{dissipativityeq_geral} might be hard to verify by means of SDP. In this paper, we tackle this problem by using the sector nonlinearity modelling approach, where the saturation function is replaced by an identity of the control $v(t)=f_v(x(t))$ and a deadzone nonlinearity (given in equation \eqref{eq:deadzoneident} below). Then, a sector condition is applied in order to help to verify \eqref{dissipativityeq_geral} for some set of the state space. The basis for the application of this approach is presented in the sequence. 

Consider the deadzone nonlinearity $\varphi$, defined as follows
\begin{equation}\label{eq:deadzoneident}
    \varphi(v(t)) = sat(v(t))-v(t),
\end{equation}
and the following set
\begin{equation}\label{polySet}
\begin{split}
         \mathcal{L}(\xbar{u}) =  \{ v \in \reals^{m}; \theta \in \reals^{m}; -\xbar{u} \leq v-\theta \leq \xbar{u} \}.
\end{split}
\end{equation}
\noindent where $\theta=f_{\theta}(x(t))$ is an auxiliary vector to be defined. We then recall the following Lemma from \cite[p.~43]{Tarbouriech_2011}.

\begin{lem}\label{lemma:Sec}
If $v$ and $\theta$ belong to set $\mathcal{L}(\xbar{u})$, then the deadzone nonlinearity $\varphi(v)$ satisfies the following inequality, which is true for any diagonal positive definite matrix $W \in \reals^{m \times m}$
\begin{equation}\label{SecBoun}
     \varphi^{\T}(v) W (\varphi(v)+\theta)\leq 0.
\end{equation}
\end{lem}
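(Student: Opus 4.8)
The plan is to exploit the fact that both the saturation map \eqref{eq_sat} and the weighting matrix $W$ are decentralized: $sat$ acts componentwise, and $W=\mathrm{diag}(w_1,\dots,w_m)$ with every $w_i>0$. Expanding the quadratic form along the $m$ channels gives
\begin{equation*}
\varphi^{\T}(v)\,W\,(\varphi(v)+\theta)=\sum_{i=1}^{m} w_i\,\varphi(v_{(i)})\bigl(\varphi(v_{(i)})+\theta_{(i)}\bigr),
\end{equation*}
so, since each $w_i>0$, it suffices to establish the scalar inequality $\varphi(v_{(i)})\bigl(\varphi(v_{(i)})+\theta_{(i)}\bigr)\leq 0$ separately for each $i$. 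This reduction is the key structural step; it relies essentially on $W$ being \emph{diagonal}, as a full $W$ would couple distinct channels through cross terms that cannot be signed one at a time.

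First I would fix a single channel and suppress the index, so that membership in $\mathcal{L}(\xbar{u})$ from \eqref{polySet} reads $-\xbar{u}\leq v-\theta\leq\xbar{u}$, with $\varphi(v)=sat(v)-v$. I would then split according to the three regimes of the scalar saturation:
\begin{enumerate}
\item $|v|\leq\xbar{u}$: here $sat(v)=v$, hence $\varphi(v)=0$ and the product vanishes;
\item $v>\xbar{u}$: here $sat(v)=\xbar{u}$, so $\varphi(v)=\xbar{u}-v<0$, while the bound $v-\theta\leq\xbar{u}$ rearranges to $\varphi(v)+\theta=\xbar{u}-v+\theta\geq 0$, so the product is nonpositive;
\item $v<-\xbar{u}$: here $sat(v)=-\xbar{u}$, so $\varphi(v)=-\xbar{u}-v>0$, while the bound $-\xbar{u}\leq v-\theta$ rearranges to $\varphi(v)+\theta=-\xbar{u}-v+\theta\leq 0$, so the product is again nonpositive.
\end{enumerate}
In every case $\varphi(v)\bigl(\varphi(v)+\theta\bigr)\leq 0$, and summing the weighted scalar inequalities recovers \eqref{SecBoun}.

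The main obstacle is bookkeeping rather than any genuine difficulty: one must pair the correct half of the box constraint $-\xbar{u}\leq v-\theta\leq\xbar{u}$ with each saturated regime, so that $\varphi(v)+\theta$ is forced to carry the sign \emph{opposite} to that of $\varphi(v)$, which is precisely what renders each scalar product nonpositive. It is worth noting that the inequality is tight, holding with equality on the unsaturated region; this is exactly the feature that makes it usable later, since \eqref{SecBoun} will be appended to $\dot{V}(x)$ with $W$ as a free multiplier to certify the dissipation inequality \eqref{dissipativityeq_geral} on the relevant portion of the state space once the auxiliary vector $\theta=f_\theta(x)$ is specified.
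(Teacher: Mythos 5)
Your proof is correct and is precisely the standard argument for this generalized sector condition: the paper does not reprove the lemma but recalls it from \cite[p.~43]{Tarbouriech_2011}, where the proof proceeds by the same channel-wise reduction (using the diagonality of $W$) followed by the three-regime case analysis pairing each saturated branch with the corresponding half of the box constraint $-\xbar{u}\leq v-\theta\leq\xbar{u}$. Nothing is missing.
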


By taking into account the DAR \eqref{eq:Plant} and the identity \eqref{eq:deadzoneident}, the following equivalent representation is obtained
\begin{align}
    \begin{cases}\label{eq:Plantdeadzone}
      \dot{x}=A_1 x(t)+A_2 \pi(t)+A_3v(t)+A_3\varphi(v(t))\\
      0 = \Upsilon_1x(t) + \Upsilon_2\pi(t)+\Upsilon_3v(t)+\Upsilon_3\varphi(v(t))\\
      y = C_1 x(t)+C_2 \pi(t)
    \end{cases} 
    \end{align}
where the matrices dependency on $(x,\delta)$ have been omitted for sake of simplicity of notation and the control input $v(t)$ was defined in \eqref{eq:control}. Such representation allows us to analyze the system stability and dissipativity properties using a combination of \eqref{dissipativityeq_geral} and the generalized sector condition provided in Lemma~\ref{lemma:Sec}. Consider a partition of vector $\pi(t)$ as $\pi(t) = \begin{bmatrix} \pi_x (t)^{\T} & \pi_{h} (t)^{\T} \end{bmatrix}^{\T}$, where $\pi_x (t) \in \reals^{n_{\pi_{x}}}, n_{\pi_{x}} \leq n_{\pi}$, is a vector containing only terms of the state $x(t)$ such that the following equation can be written
\begin{equation}\label{eq:nullsig}
    0 = \Sigma_1 x(t) + \Sigma_2 \pi_x (t)
\end{equation}
with $\Sigma_1 \in \reals^{n_{\pi_{x}} \times n}$ and $\Sigma_2 \in \reals^{n_{\pi_{x}} \times n_{\pi_{x}}}$ being matrices with affine dependence on $(x,\delta)$. Next, consider the application of Lemma~\ref{lemma:Sec} with $\theta=f_{\theta}(x(t))=v-Gx-G_{\pi} \pi_x $, where the auxiliary matrices $G(x,\delta) \in \reals^{m \times n}$ and $G_{\pi}(x,\delta) \in \reals^{m \times  n_{\pi_{x}}}$ are affine in their arguments. Then, for the resulting set 
\begin{equation}\label{eq:setLu}
    \mathcal{L}(\xbar{u}) = \{x \in \reals^n; |G_{(i)}x+G_{\pi(i)}\pi_x | \leq \xbar{u}_{(i)},  {\scalebox{.9}{$i = 1,\dots,m$}} \}
\end{equation}
the inequality 
\begin{equation}\label{eq:seccond}
    B(\varphi,v,x,\pi)=-2\varphi^{\T}(v) W (\varphi^{\T}(v)+v-Gx- G_{aux}\pi ) \geq 0, 
\end{equation}
with $G_{aux} = \begin{bmatrix} G_{\pi} & 0_{m \times (n_{\pi}-n_{\pi_x})}\end{bmatrix}$ holds if $x \in \mathcal{L}(\xbar{u})$. Observe that $\mathcal{L}(\xbar{u})$ in \eqref{eq:setLu} is a set of vectors in the $x$ domain because $\pi_x $ is a function of $x$.  

\section{Main results}\label{sec:main}

The following theorem provides a solution to Problem~\ref{prob1}.
\begin{thm} \label{stabtheorem}
Assume that there exist matrices ${P} $ $\in$ $\mathbb{S}_{n}^{+}$, ${N} $ $\in$ $\mathbb{S}_{n}^{+}$, $R$ $\in$ $\mathbb{S}_{m}^{+}$, symmetric matrix $Q$ $\in$ $\reals^{p \times p}$, diagonal matrix $W$ $\in$ $\mathbb{S}_{m}^{+}$, matrices $S$ $\in$ $\reals^{p \times m}$, $\Jf$ $\in$ $\reals^{(n+n_{\pi}+2m) \times n_{\pi}}$, $L_s$ $\in$ $\reals^{(p+m)\times m}$, $Z$ $\in$ $\reals^{n_{\pi_x} \times n_{\pi_x}}$, and matrices $\xbar{G}(x,\delta)$ $\in$ $\reals^{m \times n}$, $\xbar{G}_{\pi}(x,\delta)$ $\in$ $\reals^{m \times n_{\pi_x}}$ with affine dependence on $(x,\delta)$, such that for all $(x,\delta)$ at the vertices of $\mathcal{X} \times \mathcal{D}$ 
\begin{equation} \label{main_matineq}
    {\Phi}+\Jf \Gamma+\Gamma^{\T}\Jf^{\T} \prec 0,
\end{equation}
\begin{equation}\label{eq:inclusiontset}
\begin{bmatrix}
    P & \Sigma_1^{\T} Z^{\T} & \xbar{G}_{(i)}^{\T} \\
    \star & \Sigma_2^{\T} Z^{\T} + Z \Sigma_2 & \xbar{G}_{\pi(i)}^{\T} \\
    \star & \star & 2W_{(i,i)}  -  \xbar{u}_{(i)}^{-2}
\end{bmatrix}\succeq 0, \quad i = 1,\dots,m
\end{equation}
\noindent and
\begin{equation}\label{main_stab}
    \begin{bmatrix}
Q & \star \\
S^{\T} & R 
\end{bmatrix}+ \mathrm{He}\{L_s\begin{bmatrix} S^{\T} & R \end{bmatrix}\} \prec 0,
\end{equation}
\begin{equation}\label{eq:inclusionpolytopic}
    \begin{bmatrix}
    P & a_k \\ 
    a_k^{\T} & 1
    \end{bmatrix} \succeq 0, \quad k=1, \dots, n_{xe},
\end{equation}
\noindent hold with $\Gamma =\begin{bmatrix} \Upsilon_1 & \Upsilon_2 & \Upsilon_3 & \Upsilon_3
\end{bmatrix}$ and matrix $\Phi$ given by
\begin{equation*}
   \Phi=\begin{bmatrix}
    \mathrm{He}\{P A_1\} + N -C_1^\T Q C_1 & \hspace{-0.5cm} \star & \hspace{-0.10cm} \star  & \hspace{-0.10cm} \star \\
    A_2^\T P - C_2^\T Q C_1 &  \hspace{-0.5cm} - C_2^\T Q C_2 & \hspace{-0.10cm} \star   & \hspace{-0.10cm} \star \\
    A_3^\T P-S^{\T}C_1 & \hspace{-0.5cm} -S^\T C_2 & \hspace{-0.10cm} -R & \hspace{-0.10cm} \star \\
    A_3^\T P + \xbar{G}  & \hspace{-0.5cm}  \begin{bmatrix} ~\xbar{G}_{\pi} & 0\end{bmatrix} & \hspace{-0.10cm} -W & \hspace{-0.10cm} -2W
    \end{bmatrix}.
\end{equation*}
Then, the SOF gain $K=-R^{-1}S^{\T}$ asymptotically stabilizes the closed-loop system \eqref{eq:nonlinear:system}-\eqref{eq:control} around the origin, and the ellipsoid $\varepsilon(P,1) \subset \mathcal{H} \subset \reals^n$, where $\mathcal{H}=\mathcal{X} \cap  \mathcal{L}(\xbar{u})$, with $\varepsilon(P,1) = \{x \in \reals^n; x^\T P x \leq 1 \}$ and $\mathcal{L}(\xbar{u})$ in \eqref{eq:setLu} with $G=W^{-1}\xbar{G}$, $G_{\pi}=W^{-1}\xbar{G}_{\pi}$, is an estimate of the closed-loop domain of attraction for all $\delta \in \mathcal{D}$. 
\end{thm}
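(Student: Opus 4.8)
The plan is to use the quadratic storage/Lyapunov candidate $V(x)=x^{\T}Px$, which is positive definite since $P\in\mathbb{S}_{n}^{+}$. I would first show that \eqref{main_matineq} certifies strict QSR-dissipativity of \eqref{eq:Plantdeadzone} in the sense of Definition~\ref{defdissip} on the region $\mathcal{L}(\xbar{u})$, then specialize the supply rate to the closed loop $v=Ky$ with $K=-R^{-1}S^{\T}$ and use \eqref{main_stab} to force $\dot V<0$, and finally use \eqref{eq:inclusiontset}--\eqref{eq:inclusionpolytopic} to exhibit $\varepsilon(P,1)$ as a positively invariant inner estimate of the region of attraction.

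For the dissipativity step I would apply Finsler's Lemma (Lemma~\ref{finslerlemma}) with the vector $z=\begin{bmatrix} x^{\T} & \pi^{\T} & v^{\T} & \varphi^{\T}\end{bmatrix}^{\T}$ and the constraint $\Gamma z=0$, which is exactly the algebraic equation of the deadzone DAR \eqref{eq:Plantdeadzone}. Applying form $ii)\Rightarrow i)$ of the lemma to $-\Phi$ (with multiplier $-\Jf$), \eqref{main_matineq} yields $z^{\T}\Phi z<0$ for every $z$ annihilated by $\Gamma$, i.e. along all trajectories. The key calculation is then to expand $z^{\T}\Phi z$ using $\dot V=2x^{\T}P\dot x$ with $\dot x$ read off from \eqref{eq:Plantdeadzone}, and to verify that the block structure of $\Phi$ was built precisely so that
\begin{equation*}
 z^{\T}\Phi z=\dot V-\big(y^{\T}Qy+2y^{\T}Sv+v^{\T}Rv\big)+x^{\T}Nx+B(\varphi,v,x,\pi),
\end{equation*}
where $B$ is the sector term of \eqref{eq:seccond} once $G=W^{-1}\xbar{G}$, $G_{\pi}=W^{-1}\xbar{G}_{\pi}$ are substituted (the $\xbar{G}$, $\xbar{G}_{\pi}$, $-W$, $-2W$ entries reproducing exactly $-2\varphi^{\T}W(\varphi+v-Gx-G_{aux}\pi)$). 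Since $B\ge0$ whenever $x\in\mathcal{L}(\xbar{u})$ by Lemma~\ref{lemma:Sec}, and $x^{\T}Nx>0$ for $x\neq0$, the strict inequality $z^{\T}\Phi z<0$ delivers \eqref{dissipativityeq_geral} with $T(x)=x^{\T}Nx$.

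Next I would close the loop. Substituting $v=Ky$ with $K=-R^{-1}S^{\T}$ collapses the supply rate to $y^{\T}(Q-SR^{-1}S^{\T})y$, so it remains to show this is nonpositive. This is where the genuinely non-convex ingredient enters: the required inequality $SR^{-1}S^{\T}-Q\succeq0$ is bilinear, and \eqref{main_stab} is its convexification through the slack variable $L_s$. I would recover it by a second application of Finsler's Lemma: taking $\begin{bmatrix} S^{\T} & R\end{bmatrix}$ as the constraint annihilator and $-L_s$ as the multiplier, \eqref{main_stab} gives $w^{\T}\!\begin{bmatrix} Q & S\\ S^{\T} & R\end{bmatrix}\!w<0$ for all $w=\begin{bmatrix} w_1^{\T} & w_2^{\T}\end{bmatrix}^{\T}$ with $S^{\T}w_1+Rw_2=0$; eliminating $w_2=-R^{-1}S^{\T}w_1$ (legitimate as $R\succ0$) yields $Q-SR^{-1}S^{\T}\prec0$. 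Hence the supply rate is negative semidefinite, and combined with the previous step $\dot V\le -x^{\T}Nx<0$ for all $x\in\mathcal{L}(\xbar{u})\setminus\{0\}$.

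Finally I would establish the set inclusions and invariance. Condition \eqref{eq:inclusionpolytopic} is the standard Schur-complement certificate of $a_k^{\T}P^{-1}a_k\le1$, giving $\varepsilon(P,1)\subseteq\mathcal{X}$. For $\varepsilon(P,1)\subseteq\mathcal{L}(\xbar{u})$ I would pre- and post-multiply \eqref{eq:inclusiontset} by $\begin{bmatrix} x^{\T} & \pi_x^{\T} & \mu\end{bmatrix}$: the $Z$-blocks cancel on the constraint surface \eqref{eq:nullsig} since $2\pi_x^{\T}Z(\Sigma_1x+\Sigma_2\pi_x)=0$, leaving a scalar quadratic in $\mu$ whose nonnegativity (discriminant) gives $W_{(i,i)}^2(G_{(i)}x+G_{\pi(i)}\pi_x)^2\le(2W_{(i,i)}-\xbar{u}_{(i)}^{-2})\,x^{\T}Px$; using the linearizing bound $2W_{(i,i)}-\xbar{u}_{(i)}^{-2}\le W_{(i,i)}^2\xbar{u}_{(i)}^2$ (a completed square) together with $x^{\T}Px\le1$ on $\varepsilon(P,1)$ then forces $|G_{(i)}x+G_{\pi(i)}\pi_x|\le\xbar{u}_{(i)}$. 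Thus $\varepsilon(P,1)\subseteq\mathcal{H}=\mathcal{X}\cap\mathcal{L}(\xbar{u})$, so the dissipativity/sector analysis is valid throughout $\varepsilon(P,1)$; since $\dot V<0$ there, $\varepsilon(P,1)$ is a positively invariant sublevel set and every trajectory starting in it converges to the origin, proving asymptotic stability with $\varepsilon(P,1)$ as a region-of-attraction estimate. Robustness for all $\delta\in\mathcal{D}$ follows because $\Phi,\Gamma,\Sigma_1,\Sigma_2,\xbar{G},\xbar{G}_{\pi}$ are affine in $(x,\delta)$ while $\Jf,L_s$ are constant, so satisfaction at the vertices of $\mathcal{X}\times\mathcal{D}$ implies satisfaction on the whole polytope by convexity. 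I expect the two genuine obstacles to be the exact bookkeeping that identifies $z^{\T}\Phi z$ with the dissipativity-plus-sector expression, and the convex recovery of the bilinear output-feedback condition $SR^{-1}S^{\T}-Q\succeq0$ through $L_s$.
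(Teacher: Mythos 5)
Your proposal is correct and follows essentially the same route as the paper's proof: the same storage function $V(x)=x^{\T}Px$ with $T(x)=x^{\T}Nx$, the same two applications of Finsler's Lemma (with annihilators $\Gamma$ and $\begin{bmatrix} S^{\T} & R\end{bmatrix}$), the same sector-bound decomposition of $z^{\T}\Phi z$, and the same completed-square bound $2W_{(i,i)}-\xbar{u}_{(i)}^{-2}\le W_{(i,i)}^{2}\xbar{u}_{(i)}^{2}$ for the set inclusions. The only cosmetic difference is that you obtain $\varepsilon(P,1)\subseteq\mathcal{L}(\xbar{u})$ via a discriminant argument in the auxiliary scalar $\mu$, whereas the paper uses a congruence by $\mathrm{diag}(I,I,W_{(i,i)}^{-1})$ followed by a Schur complement; the two are equivalent.
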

\begin{proof}
First note that, by convexity, feasibility of \eqref{main_matineq} and \eqref{eq:inclusiontset} imply their satisfaction for all $(x,\delta) \in \mathcal{X} \times \mathcal{D}$. Then, consider relation \eqref{eq:inclusiontset}. Use the fact that 
\begin{equation}
    \left(\xbar{u}^{-2}_{(i)} -W_{(i.i)} \right)^{\T} \xbar{u}^2_{(i)} \left(\xbar{u}^{-2}_{(i)} -W_{(i,i)} \right) \geq 0
\end{equation}
\noindent to obtain the relation $2W_{(i,i)} -  \xbar{u}_{(i)}^{-2}\leq W_{(i,i)}^{\T} \xbar{u}_{(i)}^2 W_{(i,i)}$. Thus, satisfaction of \eqref{eq:inclusiontset} implies the fulfillment of 
\begin{equation*}
\begin{bmatrix}
    P & \Sigma_1^{\T} Z^{\T} & \xbar{G}_{(i)}^{\T} \\
    \star & \Sigma_2^{\T} Z^{\T} + Z \Sigma_2 & \xbar{G}_{\pi(i)}^{\T} \\
    \star & \star & W_{(i,i)}^{\T} \xbar{u}_{(i)}^2 W_{(i,i)}
\end{bmatrix}\succeq 0, \quad i = 1,\dots,m
\end{equation*}
Then, pre- and post-multiply the last inequality by $\textit{diag}\left(I,I,W_{(i,i)}^{-1}\right)$, apply a Schur complement, and use pre- and post-multiplication by $\begin{bmatrix}
 x^{\T} & \pi_x^{\T}
\end{bmatrix}^{\T}$ to obtain
\begin{align*}
    \label{elNotGood}
    ( G_{(i)}x &+G_{\pi(i)} \pi_x )^2~\xbar{u}^{-2} \leq \\ &x^{\T} P x + 2 \pi_x^{\T} Z (\Sigma_1 x + \Sigma_2 \pi_x), \quad i = 1,\dots,m
\end{align*}
\noindent which (by taking into account relation \eqref{eq:nullsig} and $x^\T P x \leq 1$) ensures the inclusion of the ellipsoid $\varepsilon(P,1)$ in the polyhedral set $\mathcal{L}(\xbar{u})$. ~Additionally, from relation \eqref{eq:inclusionpolytopic}, 
\begin{equation}
    x^{\T}Px-(x^{\T}a_k ) (a_k^{\T} x) \geq 0, \quad k = 1, \dots, n_{xe},
\end{equation}
which ensures $\varepsilon(P,1) \subset \mathcal{X}$. Therefore, by satisfying both \eqref{eq:inclusiontset} and \eqref{eq:inclusionpolytopic}, one ensures that $\varepsilon(P,1) \subset \mathcal{H}$, where $\mathcal{H}=\mathcal{X} \cap  \mathcal{L}(\xbar{u})$. Then, consider the quadratic Lyapunov function $V(x)= x^{\T} P x$, with ${P} $ in $\mathbb{S}_{n}^{+}$. If we can ensure that $\dot{V}(x)<0$ along the trajectories of the closed-loop system \eqref{eq:nonlinear:system}-\eqref{eq:control} for all initial conditions at the vertices of $\mathcal{W} = \mathcal{X} \times \mathcal{D}$, then it follows that $\varepsilon(P,1) \subset \mathcal{H}$ is an estimation on the domain of attraction, while $\mathcal{H}$ is a region of guaranteed asymptotic stability of the system. Consider \eqref{eq:seccond} and note that if
\begin{equation}\label{dissipativityeq_withsec}
 \begin{split}
     \dot{V}(x)+T(x)+B(\varphi,v,x,\pi) \leq 
      r(v(t),y(t))
 \end{split}
\end{equation}
is satisfied for matrices $(Q,S,R)$, then \eqref{dissipativityeq_geral} is also satisfied for all $x \in \mathcal{X} \cap \mathcal{L}(\xbar{u})$. By using $T(x)=x^{\T} N x$, ${N} $ in $\mathbb{S}_{n}^{+}$, the change of variables $\xbar{G}=WG$, $\xbar{G}_{\pi}=WG_{\pi}$, and substitutions of $y=C_1 x(t)+C_2 \pi(t)$ and $\dot{x}=A_1 x(t)+A_2 \pi(t)+A_3v(t)+A_3\varphi(v(t))$, \eqref{dissipativityeq_withsec} can be equivalently rewritten as $z^{\T} \Phi z \leq 0$, with extended vector $z = \begin{bmatrix}
x^{\T} & \pi^{\T} & v^{\T} & \varphi^{\T}(v)
\end{bmatrix}^{\T}$, and where $\Phi$ has been defined in Theorem~\ref{stabtheorem}. Clearly, matrix $\Gamma$ is a linear annihilator for the vector $z$. Then, from Lemma~\ref{finslerlemma}, $z^{\T} \Phi z \leq 0$ is satisfied for all $(x,\delta) \in \mathcal{H} \times \mathcal{D}$ if \eqref{main_matineq} is fulfilled at all vertices of the polytope $\mathcal{X} \times \mathcal{D}$ for some matrix $\Jf$. Furthermore, if at the same time we guarantee that 
\begin{equation}\label{eq:delta}
    r(v(t),y(t)) = y^{\top}Qy+2y^{\top}Sv+v^{\top}R v \leq 0,
\end{equation}
then $\dot{V}<0$ is also ensured for all $(x,\delta) \in \mathcal{H} \times \mathcal{D}$. By considering the extended vector $\zeta = \begin{bmatrix}
y^{\T} & v^{\T} 
\end{bmatrix}^{\T}$, inequality \eqref{eq:delta} can be equivalently rewritten as $\zeta^{\T} M_d \zeta \leq 0$, with 
\begin{equation*}
  M_d =  \begin{bmatrix}
Q & \star \\
S^{\T} & R 
\end{bmatrix}.
\end{equation*}
From \cite{Madeira_2021}, the control law \eqref{eq:control} with $K=-R^{-1}S^{\T}$ minimizes the supply rate in \eqref{eq:delta}. By noting that $C_s  \zeta = 0$, with $C_s = \begin{bmatrix} S^{\T} & R \end{bmatrix}$, Lemma~\ref{finslerlemma} can be applied. If there exists a matrix $L_s \in \reals^{(p+m)\times m}$ such that $M_d+ \text{He}\{L_s\begin{bmatrix} S^{\T} & R \end{bmatrix}\} \prec 0$, then $\zeta^{\T} M_d \zeta \leq 0$ is guaranteed for all $\zeta \neq 0$, leading to \eqref{main_stab} and completing the proof.\end{proof}
\begin{remark}\label{remark:pi} Consider the possible case when $\pi(t)=\pi_{h} (t)$, i.e., there is no $\pi_x(t)$ and matrices $\Sigma_1$ and $\Sigma_2$ satisfying \eqref{eq:nullsig}. Then, the sector condition is applied with the classical choice $\theta=f_{\theta}(x(t))=v-Gx$, with matrix $G$ being affine in $(x,\delta)$. In this case, matrix $\Phi$ and set $\mathcal{L}(\xbar{u})$ are given by the their simpler version considering $\xbar{G}_{\pi}=0$ and ${G}_{\pi}=0$, respectively. Furthermore, the inclusion condition \eqref{eq:inclusiontset} simplifies to
\begin{equation}\label{eq:inclusiontsetrem}
\begin{bmatrix}
    P & \xbar{G}_{(i)}^{\T} \\
    \star & 2W_{(i,i)}  -  \xbar{u}_{(i)}^{-2}
\end{bmatrix}\succeq 0, \quad i = 1,\dots,m.
\end{equation}
\end{remark}

\subsection{An iterative design procedure}

Conditions \eqref{main_matineq}, \eqref{eq:inclusiontset}, and \eqref{eq:inclusionpolytopic} in Theorem~\ref{stabtheorem} are LMIs and can be efficiently solved. On the other hand, condition \eqref{main_stab}, which ensures the negativity of $\dot{V}(x)$ (and therefore the closed-loop stability) contains a bilinearity due to the term $\text{He}\{L_s\begin{bmatrix} S^{\T} & R \end{bmatrix}\}$. Note that this bilinearity, ultimatelly, is not necessary and the procedure proposed in \cite{Madeira_2021} could be used to solve $\zeta^\T M_d \zeta \leq 0$. However, as proposed in \cite{Felipe_2020} in the context of SOF for uncertain linear systems, the inclusion of the slack variable $L_s$ through the Finsler's Lemma has been shown useful when solving the problem iteratively using the relaxation parameter $\lambda$. In this paper, we deal with this term by means of an iterative design procedure using a relaxed version of \eqref{main_stab} given by
\begin{equation}\label{main_stab:relaxed}
    \begin{bmatrix}
Q & \star \\
S^{\T} & R 
\end{bmatrix}+ \text{He}\{L_s\begin{bmatrix} S^{\T} & R \end{bmatrix}\} + \lambda \begin{bmatrix}
 -I_p & \star \\ 0 & 0 
\end{bmatrix} \prec 0,
\end{equation}
where $\lambda$ is an auxiliary scalar. 
\begin{remark}\label{remark:alg}
The multiplier $L_s$ in \eqref{main_stab} can be restrained to be of the form $L_s = \begin{bmatrix} -R^{-1}S^{\T} & -I_m \end{bmatrix}^{\T}$ without any conservatism. To see this, note that \eqref{main_stab} with this particular multiplier leads to
\begin{equation}
    \begin{bmatrix}
Q -\text{He}\{SR^{-1}S^{\T}\}& \star \\
-S^{\T} & -R 
\end{bmatrix}\prec 0
\end{equation}
while, by Schur complement, this is equivalent to $Q-S R^{-1} S^{\T} \preceq 0$, which is an alternative way to check the stability of the closed-loop with the SOF gain $K=-R^{-1}S^{\T}$ obtained by the direct substitution of $v=-R^{-1}S^{\T}y$ in \eqref{eq:delta} \cite{Madeira_2021}.  
\end{remark}
Remark~\ref{remark:alg} is interesting since it shows that when developing an iterative procedure based on the relaxed inequality \eqref{main_stab:relaxed}, the obtained values of $S$ and $R$ at each iteration can be mapped to $L_s$ at the next iteration. The remaining problem is the choice of how to initialize $L_s$ at the very first iteration. This problem is discussed in the sequence, after the design procedure is presented by Algorithm~\ref{Alg:1}.
\begin{algorithm2e}\label{Alg:1}
    \caption{Control design algorithm.}
\SetKwInOut{Input}{input}
    \SetKwInOut{Output}{output}
\Input{$i_{max}$}
    \Output{$K$, $R$, $S$, and $P$}
$i\gets 0$, $S_0 \gets 0$, and $R_0 \gets I$\; 
\While{$i<i_{max}$}{
    $L_s \gets \begin{bmatrix} -R_0^{-1}S_0^{\T} & -I_m \end{bmatrix}^{\T}$\;
    minimize $\lambda$ s.t. \eqref{main_matineq}, \eqref{eq:inclusiontset}, \eqref{eq:inclusionpolytopic}, \eqref{main_stab:relaxed}\;
\If{$\lambda \leq 0$ or $Q-S R^{-1} S^{\T} \preceq 0$}
{
  \Return $K=-R^{-1} S^{\T}$, $R$, $S$, and $P$\;
}
$i \gets i + 1$, $S_0 \gets S$, and $R_0 \gets R$\; 
}
\end{algorithm2e}

The Algorithm~\ref{Alg:1} is composed of two main parts. First, an initialization phase in line 1 takes place where the Finsler multiplier is initialized with matrices $S_0=0$ and $R_0=I$. Then, matrices $S_0$ and $R_0$ are used to update the multiplier $L_s$ in a while loop that searches for a solution to \eqref{eq:delta} by means of the relaxed inequality \eqref{main_stab:relaxed}. Note that the gain $-R_0^{-1} S_0^{\T}$ does not need to be a stabilizing one since the LMI \eqref{main_stab:relaxed} is feasible thanks to the relaxation variable $\lambda$. The solution $K=-R^{-1} S^{\T}$ is then valid if either $\lambda\leq0$ or $Q-S R^{-1} S^{\T} \preceq 0$, since both ensure the fulfilment of \eqref{eq:delta}. Note that it is possible that \eqref{main_stab} be satisfied even with positive values of $\lambda$, therefore the verification performed in line 5 can be helpful to decrease the number of iterations in Algorithm~\ref{Alg:1}. 

\begin{thm}\label{teo:alg}
The inequality \eqref{main_stab:relaxed} is always feasible at the first iteration with any initializing choices of $S_0$ and $R_0$. Furthermore, at each following iteration in the while loop, the objective $\lambda$ is nonincreasing.
\end{thm}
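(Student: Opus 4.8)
The plan is to reduce both claims to a single algebraic identity extracted from the special structure of the multiplier $L_s$ used in Algorithm~\ref{Alg:1}. First I would substitute $L_s=[\,-R_0^{-1}S_0^{\T}\ \ -I_m\,]^{\T}$ into \eqref{main_stab:relaxed} and expand $\mathrm{He}\{L_s[\,S^{\T}\ R\,]\}$ block by block. Since the relaxation term $\lambda\,\mathrm{diag}(-I_p,0)$ only acts on the top-left block and the substitution turns the bottom-right block into $-R$, the left-hand side of \eqref{main_stab:relaxed} becomes
\begin{equation*}
\begin{bmatrix} Q-\mathrm{He}\{S R_0^{-1}S_0^{\T}\}-\lambda I_p & -S_0 R_0^{-1}R \\ \star & -R \end{bmatrix}.
\end{equation*}
Because $R\succ0$ (it is constrained to $\mathbb{S}_m^{+}$) and $R_0\succ0$, the block $-R$ is invertible and I can take a Schur complement with respect to it. Completing the square then yields the key identity
\begin{equation*}
Q-\mathrm{He}\{S R_0^{-1}S_0^{\T}\}+S_0 R_0^{-1}R R_0^{-1}S_0^{\T} = Q - S R^{-1}S^{\T} + \Delta^{\T} R\,\Delta,
\end{equation*}
with $\Delta := R_0^{-1}S_0^{\T} - R^{-1}S^{\T}$, so that \eqref{main_stab:relaxed} is equivalent to $Q - S R^{-1}S^{\T} + \Delta^{\T} R\,\Delta \prec \lambda I_p$. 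The residual $\Delta^{\T} R\,\Delta$ is positive semidefinite and vanishes exactly when $R_0^{-1}S_0^{\T}=R^{-1}S^{\T}$; for $\lambda=0$ this recovers the equivalence with $Q-SR^{-1}S^{\T}\prec0$ already observed in Remark~\ref{remark:alg}.

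For the first claim I would argue that \eqref{main_stab:relaxed} never obstructs feasibility. Given any assignment of the remaining variables satisfying the $\lambda$-free LMIs \eqref{main_matineq}, \eqref{eq:inclusiontset}, \eqref{eq:inclusionpolytopic} (in particular any admissible $Q,S,R$ with $R\succ0$), and any initialization $S_0,R_0$ with $R_0\succ0$, the matrix $Q-SR^{-1}S^{\T}+\Delta^{\T}R\,\Delta$ in the reduced inequality is a fixed symmetric matrix. Hence every $\lambda>\lambda_{\max}\!\left(Q-SR^{-1}S^{\T}+\Delta^{\T}R\,\Delta\right)$ makes \eqref{main_stab:relaxed} hold, so the first-iteration problem is feasible regardless of the initializing choice. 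At the actual first iteration $S_0=0$, $R_0=I$ give $\Delta=-R^{-1}S^{\T}$, and the reduced inequality collapses to $Q\prec\lambda I_p$, feasible for every $\lambda>\lambda_{\max}(Q)$.

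For the monotonicity claim, let $(Q_k,S_k,R_k,\dots)$ with optimal value $\lambda_k$ solve the problem at iteration $k$, where the multiplier was built from $(S_{k-1},R_{k-1})$. By the identity this solution satisfies $Q_k-S_kR_k^{-1}S_k^{\T}+\Delta_k^{\T} R_k\Delta_k\prec\lambda_k I_p$ with $\Delta_k=R_{k-1}^{-1}S_{k-1}^{\T}-R_k^{-1}S_k^{\T}$; dropping the positive semidefinite term gives $Q_k-S_kR_k^{-1}S_k^{\T}\prec\lambda_k I_p$. At iteration $k+1$ the algorithm sets $S_0=S_k$, $R_0=R_k$, so reusing the entire iteration-$k$ solution makes $\Delta=0$ and the reduced inequality becomes precisely $Q_k-S_kR_k^{-1}S_k^{\T}\prec\lambda I_p$, which holds at $\lambda=\lambda_k$. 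Since \eqref{main_matineq}, \eqref{eq:inclusiontset}, \eqref{eq:inclusionpolytopic} depend neither on $L_s$ nor on $\lambda$, they remain satisfied by the reused variables; thus $(Q_k,S_k,R_k,\dots,\lambda_k)$ is feasible at iteration $k+1$. As that iteration minimizes $\lambda$, we conclude $\lambda_{k+1}\le\lambda_k$.

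The main obstacle is establishing the completion-of-squares identity correctly, since it is what simultaneously shows that the scalar $\lambda$ can absorb any fixed residual (giving feasibility) and that the gap term $\Delta^{\T}R\,\Delta$ vanishes at the relinearization point $(S_0,R_0)=(S_k,R_k)$ (giving monotonicity). The accompanying block-matrix expansion and Schur-complement step are routine, and throughout I would keep as a standing hypothesis that the $\lambda$-free LMIs are themselves feasible, i.e.\ that the underlying design problem admits a solution.
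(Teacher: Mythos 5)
Your proposal is correct, and its skeleton matches the paper's: substitute the structured multiplier $L_s=[\,-R_0^{-1}S_0^{\T}\ \ -I_m\,]^{\T}$ into \eqref{main_stab:relaxed}, observe that a large enough $\lambda$ always restores feasibility of the first iteration (assuming, as the paper also does, that the $\lambda$-free LMIs admit a solution), and obtain monotonicity by reusing the previous iterate. The genuine difference is your completion-of-squares identity $Q-\mathrm{He}\{SR_0^{-1}S_0^{\T}\}+S_0R_0^{-1}RR_0^{-1}S_0^{\T}=Q-SR^{-1}S^{\T}+\Delta^{\T}R\,\Delta$ with $\Delta=R_0^{-1}S_0^{\T}-R^{-1}S^{\T}$ (which I checked and is correct). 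The paper does not derive this; it only displays the block matrix after substitution and asserts that the trivial solution $\mathcal{Y}_{i+1}=\mathcal{Y}_i$, $\lambda_{i+1}=\lambda_i$ remains feasible ``due to the structure of the inequality.'' That assertion is not immediate, because reusing $\mathcal{Y}_i$ at iteration $i+1$ changes the multiplier from one built on $(S_{i-1},R_{i-1})$ to one built on $(S_i,R_i)$, so the constraint being satisfied is a different matrix inequality. Your identity closes exactly this gap: at iteration $i$ the solution satisfies $Q_i-S_iR_i^{-1}S_i^{\T}+\Delta_i^{\T}R_i\Delta_i\prec\lambda_i I_p$, dropping the positive semidefinite residual gives $Q_i-S_iR_i^{-1}S_i^{\T}\prec\lambda_i I_p$, and at iteration $i+1$ the relinearization point makes $\Delta=0$ so this is precisely the reduced constraint. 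So your argument buys a fully rigorous monotonicity step (and, as a bonus, recovers the equivalence with $Q-SR^{-1}S^{\T}\prec 0$ of Remark~\ref{remark:alg} as the $\Delta=0$, $\lambda=0$ case), at the cost of one extra algebraic computation the paper elides.
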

\begin{proof}
Suppose that at the first iteration, LMIs \eqref{main_matineq}, \eqref{eq:inclusiontset}, and \eqref{eq:inclusionpolytopic} hold for a solution set of matrices $\mathcal{Y}_{1}=\{P_{1},N_{1},R_{1},Q_{1},W_{1},S_{1},\Jf_{1},\xbar{G}_{1},\xbar{G}_{\pi1},Z_{1}\}$. Inequality \eqref{main_stab:relaxed} is also feasible since $L_s = \begin{bmatrix} -R_0^{-1}S_0^{\T} & -I_m \end{bmatrix}^{\T}$ leads to the condition
\begin{equation*}
    \begin{bmatrix}
Q-\lambda_1 I_p -\text{He}\{S R_0^{-1} S_0^{\T}\} & \star \\
-R R_0^{-1}S_0^{\T} & -R 
\end{bmatrix}\prec 0,
\end{equation*}
which can always be satisfied with large enough $\lambda_1$ since $R$ is a positive definite matrix. Then, due to the structure of inequality \eqref{main_stab:relaxed} derived from Finsler's lemma, at the next iteration there exists large enough $\lambda_2$ and a set of matrices $\mathcal{Y}_{2}$ satisfying the problem in line 4 since this can be achieved at least with the trivial solution $\mathcal{Y}_2=\mathcal{Y}_{1}$, $\lambda_2=\lambda_1$. For each following iteration, the same logic applies, where there exists at least the trivial solution $\lambda_{i+1}$ = $\lambda_{i}$, $\mathcal{Y}_{i+1}=\mathcal{Y}_{i}$, meaning that the new $\lambda$ is at least as good as the one from the previous iteration.   
\end{proof}

As a final contribution, an optimization problem can also be formulated to design a static output feedback gain that stabilizes the closed-system \eqref{eq:nonlinear:system}-\eqref{eq:control} while maximizing the estimated region of attraction $\varepsilon(P,1)$. When maximizing such region, different criteria can be adopted, such as maximization of the ellipsoid minor axis, volume maximization, maximization in desired directions, etc. In this work, we adopted volume maximization, which is equivalent to minimization of the trace of matrix $P$. To this end, the following algorithm takes place, where $\gamma$ is a small number used to stop the algorithm when a near-optimal solution to $trace(P)$ is found.

\begin{algorithm2e}\label{Alg:2}
\SetKwInOut{Input}{input}
    \SetKwInOut{Output}{output}
\Input{$i_{max}$, $\gamma$, and \{$R$,$S$,$P$\} solution to Algorithm~\ref{Alg:1}.}
    \Output{$K$ and $P$}
    \caption{Maximization of $\varepsilon(P,1)$.}
$i\gets 0$, $S_0 \gets S$, $R_0 \gets R$, and $P_0 \gets P$\;
\While{$i<i_{max}$}{
    $L_s \gets \begin{bmatrix} -R_0^{-1}S_0^{\T} & -I_m \end{bmatrix}^{\T}$\;
    minimize $trace(P)$ s.t. \eqref{main_matineq}, \eqref{eq:inclusiontset},
    \eqref{main_stab}, \eqref{eq:inclusionpolytopic}\;
\If{$|trace(P) - trace(P_0)| \leq \gamma$}
{
  \Return $K=-R^{-1} S^{\T}$ and $P$\;
}
$i \gets i + 1$, $S_0 \gets S$, $R_0 \gets R$, and $P_0 \gets P$\; 
}
\end{algorithm2e}

\begin{cor}\label{cor:max}
For a given $\gamma>0$, Algorithm~\ref{Alg:2} always returns an output for a sufficient amount of iterations $i_{max}$.
\end{cor}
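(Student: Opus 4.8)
The plan is to show that the sequence of optimal values $\{trace(P_i)\}_{i \geq 0}$ produced in line~4 of Algorithm~\ref{Alg:2} is monotonically non-increasing and bounded below, hence convergent; a standard Cauchy argument then forces the stopping test $|trace(P)-trace(P_0)| \leq \gamma$ in line~5 to be satisfied after finitely many iterations, so that the algorithm reaches the \textbf{Return} statement once $i_{max}$ is large enough.

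First I would establish monotonicity by a feasibility-inheritance argument that mirrors the proof of Theorem~\ref{teo:alg}. Let $\mathcal{Y}_i$ denote the set of matrices returned at iteration $i$, and note that the matrices $(S_i,R_i,P_i)$ are carried into iteration $i+1$ as $(S_0,R_0,P_0)$ through line~7. Since conditions \eqref{main_matineq}, \eqref{eq:inclusiontset}, and \eqref{eq:inclusionpolytopic} do not depend on the multiplier $L_s$, they remain satisfied by $\mathcal{Y}_i$ at the next iteration. For the only $L_s$-dependent inequality \eqref{main_stab}, I would invoke Remark~\ref{remark:alg}: with the update $L_s=\begin{bmatrix} -R_i^{-1}S_i^{\T} & -I_m\end{bmatrix}^{\T}$ and the particular choice $S=S_i$, $R=R_i$, $Q=Q_i$, inequality \eqref{main_stab} collapses, via the Schur complement computed in that remark, to $Q_i - S_i R_i^{-1} S_i^{\T} \prec 0$, which holds because $\mathcal{Y}_i$ was itself a valid solution at iteration $i$. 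Thus the entire previous solution $\mathcal{Y}_i$ is feasible for the optimization in line~4 at iteration $i+1$, and since that step minimizes $trace(P)$ over a feasible set containing $P_i$, we obtain $trace(P_{i+1}) \leq trace(P_i)$.

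Next, since every feasible $P$ lies in $\mathbb{S}_n^{+}$, we have $trace(P_i) > 0$ for all $i$, so the non-increasing sequence is bounded below and therefore converges to some limit $\ell \geq 0$. Convergent real sequences are Cauchy, hence the consecutive gaps $|trace(P_{i+1}) - trace(P_i)|$ tend to zero. Given $\gamma > 0$, there is thus an index $i^{\star}$ at which $|trace(P_{i^{\star}+1}) - trace(P_{i^{\star}})| \leq \gamma$; choosing $i_{max} > i^{\star}$ guarantees that line~5 triggers and the algorithm returns $K=-R^{-1}S^{\T}$ and $P$.

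I expect the main obstacle to be the feasibility-inheritance step, specifically reconciling the strict inequality in \eqref{main_stab} with the possibly non-strict stability certificate $Q-SR^{-1}S^{\T} \preceq 0$ inherited from Algorithm~\ref{Alg:1}. One must ensure that the solution fed in from Algorithm~\ref{Alg:1} is strictly feasible, or argue that an arbitrarily small perturbation restores strictness without affecting the limit of $trace(P)$, so that the first iteration, and hence the whole inductive chain, starts from a strictly feasible point and the minimization in line~4 is well-posed at every step.
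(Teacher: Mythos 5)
Your proof is correct and follows essentially the same route as the paper: feasibility of the previous iterate's solution set at the next iteration (checking \eqref{main_matineq}, \eqref{eq:inclusiontset}, \eqref{eq:inclusionpolytopic} directly and \eqref{main_stab} via the Schur-complement reduction of Remark~\ref{remark:alg}) is exactly the paper's argument. If anything your write-up is the more rigorous of the two --- the paper merely observes that the trivial solution $\mathcal{Y}_{i+1}=\mathcal{Y}_i$ would give a zero gap, whereas you complete the termination argument by noting that the minimization makes $trace(P_i)$ non-increasing and bounded below, so consecutive differences are eventually below $\gamma$; and your caveat about the strict inequality in \eqref{main_stab} versus the possibly non-strict certificate $Q-SR^{-1}S^{\T}\preceq 0$ returned by Algorithm~\ref{Alg:1} flags a genuine subtlety that the paper glosses over.
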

\begin{proof}
Given any solution set of matrices  $\mathcal{Y}_{s}=\{P_{s},N_{s},R_{s},Q_{s},W_{s},S_{s},\Jf_{s},\xbar{G}_{s},\xbar{G}_{\pi s},Z_{s}\}$ obtained with Algorithm~\ref{Alg:1}, there exists at least one feasible solution to Algorithm~\ref{Alg:2} at iteration $i=0$ given by the same set of matrices $\mathcal{Y}_{0}=\mathcal{Y}_{s}$. To see this, note that satisfaction of \eqref{main_matineq}, \eqref{eq:inclusiontset}, \eqref{eq:inclusionpolytopic} with this trivial solution is guaranteed. Then, by Schur complement, one obtains that \eqref{main_stab} is equivalent to $R_s^{-1}>0$ and $Q_s-S_s R_s^{-1} S_s^{\T} \preceq 0$, which is also certified since $\mathcal{Y}_{s}$ is a solution to Algorithm~\ref{Alg:1}. The same reasoning can be applied to the subsequent steps, where there always exists at least the solution $\mathcal{Y}_{i+1}$ = $\mathcal{Y}_{i}$, which leads to $|trace(P_{i+1}) - trace(P_i)|=0<\gamma, ~\forall \gamma > 0$. 
\end{proof}
From Corollary~\ref{cor:max} and its proof, it is clear that at each iteration in the while loop of Algorithm~\ref{Alg:2} there exists a control gain $K_{i+1}$ leading to an ellipsoid $\varepsilon(P,1)$ at least as large as the one obtained at the previous iteration with control $K_{i}$. Throughout this paper, we always consider $\gamma = 10^{-2}$. Smaller values of this parameter can lead to more refined estimations on the set $\varepsilon(P,1)$ at the cost of more iterations in Algorithm~\ref{Alg:2}. 

% \begin{remark}\label{remark:uncertainty}
% In the case of uncertainty...
% \end{remark}

\section{Simulation results}\label{sec:simu}

In this section, we present three numerical examples to illustrate the use of the proposed solution. Initially, a comparison with an iterative method from the literature to design scheduled static output feedback controllers for polynomial systems is presented. Then, a second example dealing with an uncertain rational nonlinear system from the literature is detailed. Then, the final example consists of a comparison for the case of MIMO polynomial nonlinear systems. The results were obtained programming the proposed conditions in Matlab 2019b using the YALMIP parser \cite{Lof:04} and the SDP solver MOSEK \cite{AA:00} release 9.1.11, in a PC equipped with: Core i7-4500U (1.80 GHz, 64 bits), 8 GB of RAM, Linux Mint 19.3. 

\subsection{Example 1 - Comparison with iterative method}

Consider the input saturated polynomial system analysed in Example 1 from \cite{jenna2021}. A DAR of this system is given by 
\begin{equation*}
    \begin{split}
&\pi=\pi_x=\begin{bmatrix}
x_1^2 & x_2^2
\end{bmatrix}^\T,~\Upsilon_2 = -I_2,~A_3 = \begin{bmatrix}
0 & 1
\end{bmatrix}^\T,\\
&A_1=\begin{bmatrix}
-1 & \frac{1}{4}\\
0 & 0
\end{bmatrix},~\Upsilon_3 = \begin{bmatrix}
  0 \\ 0
  \end{bmatrix},~C_2=\begin{bmatrix}
   0 \\ 0
  \end{bmatrix}^\T,~C_1=\begin{bmatrix}
   1 \\ -1
  \end{bmatrix}^\T,\\
  &\Upsilon_1 =
  \begin{bmatrix}
 x_1 & 0\\
 0  & x_2
  \end{bmatrix},~A_2=\begin{bmatrix}
1-\frac{3}{2}x_1 - x_2 & -\frac{3}{4}x_1 - \frac{1}{2}x_2\\ 
 0 & 0
\end{bmatrix}.
    \end{split}
\end{equation*}
In this case, relation \eqref{eq:nullsig} is satisfied with matrices $\Sigma_1=-\Upsilon_1$ and $\Sigma_2=I_2$. \cite{jenna2021} uses a rational Lyapunov function \(V(x)=x^\T P(x)^{-1} x\) to develop iterative conditions for the design of a polynomial output feedback gain $K(y)$ that stabilizes this system. Consider \(\ov{u}=1.5\), which is the strictest input saturation value used in \cite{jenna2021}. By running Algorithms~\ref{Alg:1} and~\ref{Alg:2} with polytope $\mathcal{X}$ defined with the limits $|x_1|\leq 0.9,~|x_2|\leq 0.9$ we obtain, after a total of eight iterations, the stabilizing gain $K=0.3785$ and the results summarized in Table~\ref{table:ex1}. 

As seen in Table~\ref{table:ex1}, the conditions from \cite{jenna2021} are infeasible when the degree of \(P(x)\) is lower than two. It is important to highlight that the proposed approach provided the best results even in comparison with the obtained by \cite{jenna2021} using $P(x)$ with a polynomial dependence of degree 3. Observe that Table~\ref{table:ex1} shows the maximum radius of the domain of attraction, which is the metric presented in \cite{jenna2021}. Nonetheless, even the value of the semi-minor axis of the ellipsoid obtained with the proposed method, also given by 0.8999, is bigger than the maximum radius obtained for the domain of attraction in \cite{jenna2021}. This shows the benefit of our strategy, which synthesized a stabilizing static output feedback gain in contrast to \cite{jenna2021} which utilised the more complex scheduled feedback gain $K(y)$. It is important to highlight that the number of iterations of the algorithm from \cite{jenna2021} was not provided, so no comparison regarding this aspect could be made.
\begin{table}
\centering
\caption{Example 1 - Comparison of the maximum radius of the domain of attraction for $\ov{u}=1.5$. $n$ stands for the degree of the Lyapunov matrix $P(x)$ in \cite{jenna2021}.}
\label{table:ex1}
\begin{tabular}{ccccc} 
\hline
\multirow{2}{*}{Proposed} & \multicolumn{4}{c}{\cite{jenna2021}}  \\ 
\cline{2-5} & $n=0$ & $n=1$ & $n=2$ & $n=3$ \\ 
\hline
0.9001 & Infeasible & Infeasible & \multicolumn{1}{l}{0.5711} & \multicolumn{1}{l}{0.5816}  \\
\hline
\end{tabular}
\end{table}

% \begin{table}
% \centering
% \caption{Example 1 - Comparison of the maximum radius of the domain of attraction for $\ov{u}=1.5$.}
% \label{table:ex1}
% \begin{tabular}{ccccc} 
% \hline
% \multirow{2}{*}{This approach} & \multicolumn{4}{c}{Degree of $P(x)$ - \cite{jenna2021}}  \\ 
% \cline{2-5} & 0 & 1 & 2 & 3 \\ 
% \hline
% 0.9001 & Infeasible & Infeasible & \multicolumn{1}{l}{0.5711} & \multicolumn{1}{l}{0.5816}  \\
% \hline
% \end{tabular}
% \end{table}
% \begin{table}
% \centering
% \caption{Example 1 - Comparison of the maximum radius of the domain of attraction for $\ov{u}=1.5$.}
% \label{table:ex1}
% \begin{tabular}{ccccc} 
% \hline
% \multirow{3}{*}{Proposed} & \multicolumn{4}{c}{~} \cite{jenna2021} \\ 
% \cline{2-5} & \multicolumn{4}{c}{Degree of $P(x)$}     \\ 
% \cline{2-5}  & 0 & 1 & 2 & 3       \\ 
% \hline
% 0.9001 & Infeasible & Infeasible & \multicolumn{1}{l}{0.5711} & 0.5816  \\
% \hline
% \end{tabular}
% \end{table}

% \begin{table}[tb!]
% \centering
% \caption{Example 1 - Comparison of the maximum radius of the domain of attraction for $\ov{u}=1.5$.}
% \begin{tabular}{cccll} 
% \hline
% \multirow{2}{*}{This approach} & \multicolumn{4}{c}{ Degree of $P(x)$} from \cite{jenna2021}   \\ 
% \cline{2-5}
%  & 0 &  1 &  2 & 3  \\ 
% \hline
% 0.9001 & Infeasible & Infeasible & 0.5711 & 0.5816 \\
% \hline
% \end{tabular}
% \label{table:ex1}
% \end{table}

\subsection{Example 2 - Rational nonlinear system}
Consider the uncertain rational nonlinear input saturated plant analysed in Example 5.4 from \cite{azizi2018regional}, which corresponds to an inverted pendulum system. The pendulum parameters and the DAR for the system are the same considered in \cite{azizi2018regional}, which is such that $\pi(t)=\pi_h(t)$, meaning that Remark~\ref{remark:pi} applies. Also, we consider the same saturation limit \(\xbar{u}=0.25\). As we did not find any papers dealing with SOF design for rational nonlinear systems, we consider a comparison with \cite{azizi2018regional}, which designs static state feedback (SSF) gains. For a fair comparison, we consider the same state and uncertainty polytope limits as \cite{azizi2018regional}, with $|\delta_1| \leq 0.10$, $|\delta_2| \leq 0.99$, $|x_1| \leq 0.19$, and $|x_2| \leq 0.22$. First, we design a static state feedback (SSF) (equivalent to having an output $y(t)=x(t)$), by running Algorithms~\ref{Alg:1} and~\ref{Alg:2} to obtain the stabilizing gain $K = \begin{bmatrix}
  -1.9791 &  -3.3947 \end{bmatrix}$. Then, to illustrate the ability of the proposed method to deal with SOF design, we consider an output feedback design for $y(t) = \begin{bmatrix} 1 & 1  \end{bmatrix}x(t)$. In this case, we obtain the stabilizing gain $K=-2.3968$. For comparison, Table~\ref{results_table} summarizes the obtained results with respect to the $log(det(P^{-1}))$, as in \cite{azizi2018regional}. Note that both the proposed SSF and SOF lead to larger estimates on the closed-loop region of attraction since greater values of $log(det(P^{-1}))$ are obtained\footnote{The volume of an $n$-dimensional ellipsoid $\varepsilon(P,1)$ is proportional to $log(det(P^{-1}))$ \cite[p. 60]{Tarbouriech_2011}.}. To have a more intuitive comparative measure, the area of the ellipsoid $\varepsilon(P,1)$ is also computed in Table~\ref{results_table}.
\begin{table}[h!]
\centering  
\caption{Example 2 - Summary of results.}
\label{results_table}
\begin{tabular} {clclclc} 
\hline
\multicolumn{1}{l}{}  & \hspace{-0.4cm} \scriptsize \cite{azizi2018regional}  & \scriptsize Proposed - SSF \& SOF \\  
\hline
\hspace{-0.2cm}$log(det(P^{-1}))$  & $~~-6.800$   & $-6.789$  \& $-6.726
$  \\ 
\hspace{-0.2cm}Area of $\varepsilon(P,1)$  & $~~~~0.1046$   & $~0.1054$ \& $~0.1088
$  \\ \hline
\end{tabular}
\end{table}
\subsection{Example 3 - MIMO polynomial system}
Consider the MIMO uncertain input saturated polynomial system in its DAR form analysed in Example~5.3 from \cite{azizi2018regional}. The DAR is such that $\pi(t)=\pi_x(t)$, with relation \eqref{eq:nullsig} being satisfied with $\Sigma_1 = \begin{bmatrix}
 -x_1 & 0
\end{bmatrix}$ and $\Sigma_2 = 1$. For sake of comparison, we consider $y(t)=x(t)$ and $\ov{u}=[1~1]^\T$, which are the same output equation and saturation bounds considered in \cite{azizi2018regional}, respectively. Let us assume $|x_1|\leq 4$ and $|\delta| \leq 0.8$, which correspond to the $\mathcal{X}$ polytope and the largest uncertainty used in \cite{azizi2018regional}. Then, by running Algorithm~\ref{Alg:2} we obtain a stabilizing gain given by
\begin{equation*}
    K=\begin{bmatrix}
   -3.1921 &  -0.4858 \\
    -21.4646  & -3.6065
    \end{bmatrix}
\end{equation*}
and a matrix $P$ such that $log(det(P^{-1}))=8.4600$, which means we obtain a much larger estimate on the closed-loop region of attraction than the one in \cite{azizi2018regional} since it can be checked in Figure 6 from \cite{azizi2018regional} that the approach therein obtains $log(det(P^{-1}))\leq 4$. Now, to demonstrate the ability of the proposed technique to deal with nonlinear output maps, consider the same state and uncertainty polytopes but with $y=h(x,\delta) =  x_1 + x_2 + 0.5x_1^2$, which can be dealt with by using DAR matrices $C_1 = \begin{bmatrix} 1 & 1 \end{bmatrix}$ and $C_2 = 0.5$. In this case, we obtain a gain $K=\begin{bmatrix}-0.0746 & -1.6825\end{bmatrix}^{\T}$ and $log(det(P^{-1}))= 6.6871$, which is also larger than the estimate from \cite{azizi2018regional}. 
 
\section{Conclusion}\label{sec:conclu}

This work proposed new conditions and an associated iterative algorithm for the design of stabilizing static output feedback gains for nonlinear systems affected by both uncertainties and input saturation. The approach is aimed at rational nonlinear systems. However, as demonstrated in the paper, it also easily applies to the less general case of polynomial nonlinear systems. An optimization procedure leading to feedback gains that maximize the closed-loop region of attraction was also formulated. The application of the proposed method to nonlinear systems from the literature demonstrated the effectiveness of the iterative design algorithms in terms of the obtained estimations on the closed-loop region of attraction. 

Ongoing research includes the application of equilibrium-independent dissipativity for feedback stabilization of nonzero equilibria using linear SDP strategies. Also, future work will envisage the use of rational Lyapunov functions in order to decrease conservatism of the conditions \cite{Trofino_2014}.
% \appendices

% Appendixes, if needed, appear before the acknowledgment.

% \section*{Acknowledgment}

\bibliographystyle{IEEEtran}
\bibliography{refs}

% Generated by IEEEtran.bst, version: 1.14 (2015/08/26)
\begin{thebibliography}{10}
\providecommand{\url}[1]{#1}
\csname url@samestyle\endcsname
\providecommand{\newblock}{\relax}
\providecommand{\bibinfo}[2]{#2}
\providecommand{\BIBentrySTDinterwordspacing}{\spaceskip=0pt\relax}
\providecommand{\BIBentryALTinterwordstretchfactor}{4}
\providecommand{\BIBentryALTinterwordspacing}{\spaceskip=\fontdimen2\font plus
\BIBentryALTinterwordstretchfactor\fontdimen3\font minus
  \fontdimen4\font\relax}
\providecommand{\BIBforeignlanguage}[2]{{%
\expandafter\ifx\csname l@#1\endcsname\relax
\typeout{** WARNING: IEEEtran.bst: No hyphenation pattern has been}%
\typeout{** loaded for the language `#1'. Using the pattern for}%
\typeout{** the default language instead.}%
\else
\language=\csname l@#1\endcsname
\fi
#2}}
\providecommand{\BIBdecl}{\relax}
\BIBdecl

\bibitem{Tarbouriech_2011}
S.~Tarbouriech, G.~Garcia, J.~M. {Gomes da Silva Jr.}, and I.~Queinnec,
  \emph{Stability and Stabilization of Linear Systems with Saturating
  Actuators}.\hskip 1em plus 0.5em minus 0.4em\relax London: Springer, 2011.

\bibitem{haddad2008nonlinear}
W.~M. Haddad and V.~Chellaboina, \emph{Nonlinear dynamical systems and control:
  a {L}yapunov-based approach}.\hskip 1em plus 0.5em minus 0.4em\relax
  Princeton, NJ, USA: Princeton university press, 2008.

\bibitem{Khalil_2002}
H.~K. Khalil, \emph{Nonlinear Systems}.\hskip 1em plus 0.5em minus 0.4em\relax
  Upper Saddle River, NJ, USA: Prentice Hall, 2002.

\bibitem{coutinho_2010}
D.~F. Coutinho and J.~M. {Gomes da Silva Jr.},
  ``\BIBforeignlanguage{English}{Computing estimates of the region of
  attraction for rational control systems with saturating actuators},''
  \emph{\BIBforeignlanguage{English}{IET Control Theory \& Applications}},
  vol.~4, pp. 315--325(10), March 2010.

\bibitem{BEFB:94}
S.~Boyd, L.~El{ }Ghaoui, E.~Feron, and V.~Balakrishnan, \emph{Linear Matrix
  Inequalities in System and Control Theory}.\hskip 1em plus 0.5em minus
  0.4em\relax Philadelphia, PA: SIAM Studies in Applied Mathematics, 1994.

\bibitem{oliveira2012state}
M.~Z. Oliveira, J.~M. Gomes~da Silva~Jr., and D.~F. Coutinho, ``State feedback
  design for rational nonlinear control systems with saturating inputs,'' in
  \emph{Proceedings of the 2012 {A}merican {C}ontrol {C}onference
  ({ACC})}.\hskip 1em plus 0.5em minus 0.4em\relax IEEE, 2012, pp. 2331--2336.

\bibitem{azizi2018regional}
S.~Azizi, L.~A. Torres, and R.~M. Palhares, ``Regional robust stabilisation and
  domain-of-attraction estimation for {MIMO} uncertain nonlinear systems with
  input saturation,'' \emph{International Journal of Control}, vol.~91, no.~1,
  pp. 215--229, 2018.

\bibitem{Castro_2021}
R.~S. Castro, J.~V. Flores, A.~T. Salton, and J.~M. Gomes~da Silva~Jr,
  ``Controller and anti-windup co-design for the output regulation of rational
  systems subject to control saturation,'' \emph{nternational Journal of Robust
  and Nonlinear Control}, vol.~31, no.~4, pp. 1395--1417, 2021.

\bibitem{SADABADI2016}
M.~S. Sadabadi and D.~Peaucelle, ``From static output feedback to structured
  robust static output feedback: A survey,'' \emph{Annual Reviews in Control},
  vol.~42, pp. 11--26, 2016.

\bibitem{Madeira_2021}
{D. de S. Madeira}, ``Necessary and sufficient dissipativity-based conditions
  for feedback stabilization,'' \emph{IEEE Transactions on Automatic Control,
  {doi:} 10.1109/TAC.2021.3074850}, 2021.

\bibitem{VB:96}
L.~Vandenberghe and S.~Boyd, ``Semidefinite programming,'' \emph{SIAM Review},
  vol.~38, no.~1, pp. 49--95, Mar. 1996.

\bibitem{madeira2020application}
{D. de S. Madeira} and V.~V. Viana, ``An application of {QSR}-dissipativity to
  the problem of static output feedback robust stabilization of nonlinear
  systems,'' \emph{Proceedings of the XIX Brazilian Conference on Automation},
  2020.

\bibitem{willm2}
J.~C. Willems, ``Dissipative dynamical systems part {II}: Linear systems with
  quadratic supply rates,'' \emph{Archive for Rational Mechanics and Analysis},
  vol.~45, no.~5, pp. 352--393, 1972.

\bibitem{hill2}
D.~Hill and P.~Moylan, ``The stability of nonlinear dissipative systems,''
  \emph{IEEE Transactions on Automatic Control}, vol.~21, no.~5, pp. 708--711,
  1976.

\bibitem{safo1}
M.~G. Safonov, \emph{Stability and Robustness of Multivariable Feedback
  Systems}.\hskip 1em plus 0.5em minus 0.4em\relax Cambridge, MA, USA: MIT
  Press, 1980.

\bibitem{meg1}
A.~Megretski and A.~Rantzer, ``System analysis via integral quadratic
  constraints,'' \emph{IEEE Transactions on Automatic Control}, vol.~42, no.~6,
  pp. 819--830, 1997.

\bibitem{Peaucelle_2005}
D.~Peaucelle and D.~Arzelier, ``Ellipsoidal sets for resilient and robust
  static output-feedback,'' \emph{IEEE Transactions on Automatic Control},
  vol.~50, no.~6, pp. 899--904, 2005.

\bibitem{Valmorbida2013}
G.~Valmorbida, S.~Tarbouriech, and G.~Garcia, ``Design of polynomial control
  laws for polynomial systems subject to actuator saturation,'' \emph{IEEE
  Transactions on Automatic Control}, vol.~58, no.~7, pp. 1758--1770, 2013.

\bibitem{RAN2016}
M.~Ran, Q.~Wang, and C.~Dong, ``Stabilization of a class of nonlinear systems
  with actuator saturation via active disturbance rejection control,''
  \emph{Automatica}, vol.~63, pp. 302--310, 2016.

\bibitem{JENNAWASIN2018}
T.~Jennawasin and D.~Banjerdpongchai, ``Design of state-feedback control for
  polynomial systems with quadratic performance criterion and control input
  constraints,'' \emph{Systems \& Control Letters}, vol. 117, pp. 53--59, 2018.

\bibitem{brogl1}
B.~Brogliato, R.~Lozano, B.~Maschke, and O.~Egeland, \emph{Dissipative Systems
  Analysis and Control - Theory and Applications}.\hskip 1em plus 0.5em minus
  0.4em\relax London, UK: Springer-Verlag, 2020.

\bibitem{Mauricio_2001}
M.~C. de~Oliveira and R.~E. Skelton, ``Stability tests for constrained linear
  systems,'' in \emph{Perspectives in robust control}, S.~R. Moheimani,
  Ed.\hskip 1em plus 0.5em minus 0.4em\relax London: Springer London, 2001, pp.
  241--257.

\bibitem{Felipe_2020}
A.~Felipe and R.~C. L.~F. Oliveira, ``An {LMI}-based algorithm to compute
  robust stabilizing feedback gains directly as optimization variables,''
  \emph{IEEE Transactions on Automatic Control}, vol.~66, no.~9, pp.
  4365--4370, 2021.

\bibitem{Lof:04}
J.~L{\"o}fberg, ``{YALMIP}: {A} toolbox for modeling and optimization in
  {MATLAB},'' in \emph{Proceedings of the 2004 IEEE International Symposium on
  Computer Aided Control Systems Design}, Taipei, Taiwan, Sep. 2004, pp.
  284--289, \url{http://yalmip.github.io}.

\bibitem{AA:00}
E.~D. Andersen and K.~D. Andersen, ``The {MOSEK} interior point optimizer for
  linear programming: {A}n implementation of the homogeneous algorithm,'' in
  \emph{High Performance Optimization}, ser. Applied Optimization, H.~Frenk,
  K.~Roos, T.~Terlaky, and S.~Zhang, Eds.\hskip 1em plus 0.5em minus
  0.4em\relax Springer US, 2000, vol.~33, pp. 197--232,
  \url{http://www.mosek.com}.

\bibitem{jenna2021}
T.~Jennawasin and D.~Banjerdpongchai, ``Iterative {LMI} approach to robust
  static output feedback control of uncertain polynomial systems with bounded
  actuators,'' \emph{Automatica}, vol. 123, p. 109292, 2021.

\bibitem{Trofino_2014}
A.~Trofino and T.~Dezuo, ``{LMI} stability conditions for uncertain rational
  nonlinear systems,'' \emph{nternational Journal of Robust and Nonlinear
  Control}, vol.~24, no.~18, pp. 3124--3169, 2014.

\end{thebibliography}

\end{document}